\newtheorem{algorithm}{Algorithm}
\begin{document}
\newcommand{\xfill}[2][1ex]{{%
  \dimen0=#2\advance\dimen0 by #1
  \leaders\hrule height \dimen0 depth -#1\hfill%
}}
\title{Modified golden ratio algorithms for solving equilibrium problems
}

\titlerunning{An explicit golden ratio algorithm for EPs}        

\author{Dang Van Hieu \and Jean Jacques Strodiot \and Le Dung Muu
}

\authorrunning{Dang Van Hieu \and Jean Jacques Strodiot \and Le Dung Muu} 
\institute{Dang Van Hieu \at
              Applied Analysis Research Group, Faculty of Mathematics and Statistics, Ton Duc Thang University, Ho Chi Minh City, Vietnam\\
              \email{dangvanhieu@tdtu.edu.vn}           
\and
Jean Jacques Strodiot \at
	Department of Mathematics, Namur Institute for Complex Systems, University
of Namur, Namur, Belgium\\
\email{jjstrodiot@fundp.ac.be}		       
\and
           Le Dung Muu \at
              TIMAS, Thang Long University, Ha Noi, Vietnam \\
               \email{ldmuu@math.ac.vn}     
}

\date{Received: date / Accepted: date}

\maketitle

\begin{abstract}
In this paper an explicit algorithm is proposed for solving an equilibrium problem whose associated bifunction is pseudomonotone and satisfies 
a Lipschitz-type condition. Contrary to many algorithms, our algorithm is done without using explicitly the Lipschitz constants of bifunction 
although its convergence is obtained under such that condition. The introduced method is a form of proximal-like method whose steplengths 
are explicitly generated at each iteration without using any linesearch procedure.  First we prove the convergence of the algorithm, and after 
we establish its $R$-linear rate of convergence  under the assumption of strong pseudomonotonicity of the bifunction.  Afterwards  several 
numerical results are displayed to illustrate and to compare the behavior of the new algorithm with other ones.
\keywords{Equilibrium problem \and Pseudomonotone bifunction \and Strongly pseudomonotone bifunction \and Lipschitz-type condition}
\subclass{65J15 \and  47H05 \and  47J25 \and  47J20 \and  91B50.}
\end{abstract}
\section{Introduction}\label{intro}
This paper concerns an iterative method for approximating a solution of an equilibrium problem (shortly, EP) in the sense of Blum, Muu and Oettli in 
\cite{BO1994,M1984,MO1992}. This problem is also called the Ky Fan inequality \cite{K1972} due to his early contribution in this field. 
Problem (EP) can be considered a general mathematical model because it unifies in a simple form numerous 
models as optimization problems, variational inequalities, fixed point problems and many others, see for example \cite{FP2002,K2007}. This can be the reason why in recent years problem (EP) has received a lot of attention by some authors both theoretically and algorithmically. The most well-known algorithms 
for solving problem (EP) are the proximal point method \cite{M1999,K2003}, the proximal-like method (extragradient method) \cite{FA1997,QMH2008}, 
the descent method \cite{BCPP13,KP2003}, the linesearch extragradient method \cite{QMH2008}, the projected subgradient method \cite{SS2011}, the golden 
ratio algorithm \cite{V2018} and others \cite{AH2018,BCPP2019,FA1997,HCX2018,H2018MMOR,Hieu2018NUMA,LS2016,SNN2013}.\\[.1in]
The proximal point method is based on the so-called resolvent of bifunction. At each iteration, this method consists in solving a regularized equilibrium 
subproblem depending on 
a parameter. The solutions of these subproblems can converge finitely or asymptotically to some solution of the original problem. 
In this paper, we 
are interested in another well-known kind of methods using optimization subprograms. It is  the extragradient method as developed in 
\cite{FA1997,HCX2018,QMH2008}, where 
 two optimization programs are solved at each iteration. This can be expensive in the cases where the bifunction and/or the feasible set have complicated structures. 
Very recently, a nice and elegant algorithm, named the golden ratio algorithm, has been proposed by Malitsky \cite{M2018} for solving (pseudo) monotone 
variational inequalities in finite dimensional spaces. Unlike extragradient-like algorithms, the golden ratio algorithm \cite{M2018} only requires to compute one 
projection on feasible set and one value of operator at the curren approximation. This algorithm is done in both cases with and without previously konwing the Lipschit constant of operator.\\[.1in]
Recently, motivated by the results of Malitsky \cite{M2018}, Vinh has introduced an algorithm which only uses one optimization program per 
iteration to construct solution approximations for problem (EP), see \cite[Algorithm 3.1]{V2018} for more details. At this stage, it is emphasized that the algorithms 
in \cite{FA1997,HCX2018,QMH2008,V2018} are applied to pseudomonotone (EP) under a Lipschitz-type condition, and that these algorithms explicitly use  a stepsize depending 
on the Lipschitz-type constants of the bifunction. In particular, this means that the 
Lipschitz-type constants must be the input parameters of the algorithms although these constants are often unknown or difficult to estimate. 
In his paper Vinh has presented another algorithm \cite[Algorithm 4.1]{V2018} where the Lipschitz-type constants associated with the bifunction are not 
supposed to be a priory known. The stepsizes are defined explicitly at each iteration in such a way that their sequence is decreasing and not summable. 
Under these new rules, Vinh has established the strong convergence of the iterative sequence generated by his algorithm without its rate of convergence.\\[.1in]
In this paper, motivated and inspired by the aforementioned results, we introduce an iterative algorithm for solving an equilibrium problem involving 
a pseudomonotone and Lipschitz-type bifunction in a finite dimensional space. The algorithm is explicit in the sense that it is done without previously 
knowing the Lipschitz-type constants. The algorithm uses variable stepsizes which are generated at each iteration and are based on some previous iterates. 
No linesearch procedure is required and the convergence of the resulting algorithm is obtained under the assumption of  
pseudomonotonicity of the bifunction. These results improve the ones obtained by Vinh in \cite{V2018}.
Furthermore, in the case when the bifunction is strongly pseudomonotone, we can establish the $R$-linear rate of convergence of the algorithm. Numerical results are reported to demonstrate 
the behavior of the new algorithm and also to compare it with some other algorithms.\\[.1in]
The remainder of this paper is organized as follows: In Sect. \ref{pre} we collect some definitions and preliminary results used in the paper. 
Sect. \ref{main} deals with the description of the new algorithm and its convergence. Finally, in Sect. \ref{example}, several numerical experiments are reported 
to illustrate the behavior of the new algorithm.
\section{Preliminaries}\label{pre}
Let $C$ be a nonempty closed convex subset in $\Re^m$ and $f:C\times C\to \Re$ be a bifunction with $f(x,x)=0$ for all $x\in C$. 
The equilibrium problem (shortly, EP) for the bifunction $f$ on $C$ can be stated as follows:
$$
\mbox{Find}~x^*\in C~\mbox{such that}~f(x^*,y)\ge 0\,~\mbox{for all}~y\in C.
\eqno{\rm (EP)}
$$
For solving this problem, we need to recall some concepts of monotonicity of a bifunction, see \cite{BO1994,MO1992} for more details.
A bifunction $f:C\times C\to \Re$ is said to be:\\[.1in]
{\rm (a)}\,  \textit{strongly monotone} on $C$ if there exists a constant $\gamma>0$ such that
$$
f(x,y)+f(y,x)\le -\gamma\, \|x-y\|^2\,~\mbox{for all}~ x,y\in C;
$$
{\rm (b)}\, \textit{monotone} on $C$ if
$$
f(x,y)+f(y,x)\le 0\,~\mbox{for all}~ x,y\in C;
$$
{\rm (c)}\, \textit{pseudomonotone} on $C$ if
$$
f(x,y)\ge 0 \ \Longrightarrow\ f(y,x)\le 0\,~\mbox{for all}~x,y\in C;
$$
{\rm (d)}\, \textit{strongly pseudomonotone} on $C$ if there exists a constant $\gamma>0$ such that
$$
 f(x,y)\ge 0 \ \Longrightarrow\  f(y,x) \le-\gamma\, \|x-y\|^2\,~\mbox{for all}~ x,y\in C.
 $$
\vskip 2mm

\noindent From the above definitions, it is easy to see that the following implications hold:
$$
 {\rm (a)}\,\Longrightarrow\, {\rm (b)}\,\Longrightarrow\, {\rm (c)}\,~{\rm and}\,~{\rm (a)}\,\Longrightarrow\, {\rm (d)}\,\Longrightarrow \,{\rm (c)}.
$$
We say that the bifunction $f$ satisfies a Lipschitz-type condition if there exist $c_1>0,c_2>0$ such that
$$
 f(x,y) + f(y,z) \geq f(x,z) - c_1\|x-y\|^2 - c_2\|y-z\|^2\, ~ \mbox{for all}~ x,~y,~z \in C.
 $$
Let $g:C\to \Re$ be a proper, lower semicontinuous, convex function and let $\lambda >0$. The proximal operator ${\rm prox}_{\lambda g}$ associated with $g$ and $\lambda>0$ 
is defined by 
$$
{\rm prox}_{\lambda g}(z)=\arg\min \left\{\lambda g(x)+\frac{1}{2}\|x-z\|^2:x\in C\right\},~z\in \Re^m.
$$
The following lemma gives an important property of the proximal mapping (see \cite{BC2011} for more details)
\begin{lemma}\label{prox}
Let $z \in \Re^m$. Then
$\bar{x}={\rm prog}_{\lambda g}(z) \Leftrightarrow \left\langle \bar{x}-z,x-\bar{x}\right\rangle \ge \lambda \left(g(\bar{x})-g(x)\right),~\forall x\in C.$
\end{lemma}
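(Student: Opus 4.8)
The statement is the classical variational characterization of the proximal point, so the plan is short. The key observation is that $\bar{x}={\rm prox}_{\lambda g}(z)$ means precisely that $\bar{x}$ minimizes over $C$ the strongly convex function $h(x):=\lambda g(x)+\tfrac12\|x-z\|^2$ (strong convexity also gives uniqueness of the minimizer, which is why ${\rm prox}_{\lambda g}$ is well defined). So the whole proof amounts to translating ``$\bar x$ minimizes $h$ on $C$'' into the stated inequality, in both directions. Since $g$ is only convex and lower semicontinuous, I would deliberately avoid differentiating it and instead argue with convex combinations and a one-sided limit.

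For the implication ``$\Rightarrow$'', I would fix $x\in C$ and $t\in(0,1]$ and set $x_t:=\bar x+t(x-\bar x)\in C$. From $h(\bar x)\le h(x_t)$, the convexity estimate $g(x_t)\le(1-t)g(\bar x)+t\,g(x)$, and the expansion
$$\tfrac12\|x_t-z\|^2=\tfrac12\|\bar x-z\|^2+t\langle \bar x-z,\,x-\bar x\rangle+\tfrac{t^2}{2}\|x-\bar x\|^2,$$
one cancels $\lambda g(\bar x)$ and $\tfrac12\|\bar x-z\|^2$, divides by $t>0$, and obtains
$$\lambda\bigl(g(\bar x)-g(x)\bigr)\le \langle \bar x-z,\,x-\bar x\rangle+\tfrac{t}{2}\|x-\bar x\|^2.$$
Letting $t\downarrow 0$ yields exactly $\langle \bar x-z,\,x-\bar x\rangle\ge\lambda(g(\bar x)-g(x))$.

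For the implication ``$\Leftarrow$'', I would assume the inequality holds for every $x\in C$ and use the elementary identity $\tfrac12\|x-z\|^2-\tfrac12\|\bar x-z\|^2=\langle \bar x-z,\,x-\bar x\rangle+\tfrac12\|x-\bar x\|^2$ to write
$$h(x)-h(\bar x)=\lambda\bigl(g(x)-g(\bar x)\bigr)+\langle \bar x-z,\,x-\bar x\rangle+\tfrac12\|x-\bar x\|^2\ge \tfrac12\|x-\bar x\|^2\ge 0,$$
the first two terms being bounded below by $0$ thanks to the assumed inequality. Hence $\bar x$ minimizes $h$ on $C$, i.e. $\bar x={\rm prox}_{\lambda g}(z)$. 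There is essentially no real obstacle here; this is a classical fact (see \cite{BC2011}). The only mild subtlety is the nonsmoothness of $g$, which is handled cleanly by the convex-combination-plus-limit device above rather than by subdifferential calculus (one could alternatively unpack $0\in\lambda\partial g(\bar x)+(\bar x-z)+N_C(\bar x)$, but the elementary argument keeps the proof self-contained).
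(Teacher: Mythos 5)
Your proof is correct. Note that the paper itself does not prove this lemma at all --- it simply cites \cite{BC2011} --- so there is no in-paper argument to compare against; your write-up supplies a self-contained proof of the cited fact. Both directions check out: the forward direction correctly uses the convex combination $x_t=(1-t)\bar x+tx\in C$, the convexity of $g$, the quadratic expansion of $\tfrac12\|x_t-z\|^2$, division by $t$, and the limit $t\downarrow 0$; the reverse direction correctly combines the assumed inequality with the identity $\tfrac12\|x-z\|^2-\tfrac12\|\bar x-z\|^2=\langle \bar x-z,x-\bar x\rangle+\tfrac12\|x-\bar x\|^2$ to conclude $h(x)\ge h(\bar x)+\tfrac12\|x-\bar x\|^2$, which even gives the uniqueness of the minimizer for free. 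Your choice to avoid subdifferential calculus keeps the argument elementary and matches the generality of the hypotheses (here $g(x,\cdot)=f(x_n,\cdot)$ is merely convex and subdifferentiable on $C$, per assumption (A4)). The only cosmetic point is that the lemma's statement writes ${\rm prog}_{\lambda g}$ for ${\rm prox}_{\lambda g}$, an evident typo that your proof implicitly and correctly reads past.
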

\begin{remark}\label{rem1}
From Lemma \ref{prox}, it is easy to show that if $x={\rm prox}_{\lambda g}(x)$ then 
$$x\in {\rm arg}\min\left\{g(y):y\in C\right\}:=\left\{x\in C: g(x)=\min_{y\in C}g(y)\right\}.$$
\end{remark}
The next result is also valid in any Hilbert space, see, e.g., in \cite[Corollary 2.14]{BC2011}.
\begin{lemma}\label{eq} For all $x,~y\in \Re^m$ and $\alpha\in \Re$, the following equality always holds
$$
\|\alpha x+(1-\alpha)y\|^2=\alpha \|x\|^2+(1-\alpha)\|y\|^2-\alpha(1-\alpha)\|x-y\|^2.
$$
\end{lemma}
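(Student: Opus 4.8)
The plan is to prove this purely algebraic identity by expanding both sides in terms of the inner product on $\Re^m$; no topological, convexity, or variational ingredients are needed, so Lemma~\ref{prox} and Remark~\ref{rem1} play no role, and in fact the statement is already recorded as \cite[Corollary 2.14]{BC2011}, so the only task is to record the short computation.

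First I would expand the left-hand side using bilinearity and symmetry of the inner product: since $\|u\|^2 = \langle u,u\rangle$, one obtains $\|\alpha x + (1-\alpha)y\|^2 = \alpha^2\|x\|^2 + 2\alpha(1-\alpha)\langle x,y\rangle + (1-\alpha)^2\|y\|^2$. Next I would treat the right-hand side: substituting the expansion $\|x-y\|^2 = \|x\|^2 - 2\langle x,y\rangle + \|y\|^2$ into $\alpha\|x\|^2 + (1-\alpha)\|y\|^2 - \alpha(1-\alpha)\|x-y\|^2$ and collecting like terms produces the coefficient $\alpha - \alpha(1-\alpha)$ in front of $\|x\|^2$, the coefficient $(1-\alpha) - \alpha(1-\alpha)$ in front of $\|y\|^2$, and $+2\alpha(1-\alpha)$ in front of $\langle x,y\rangle$.

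Finally I would simplify using $\alpha - \alpha(1-\alpha) = \alpha^2$ and $(1-\alpha) - \alpha(1-\alpha) = (1-\alpha)^2$, which makes the right-hand side literally equal to the expansion of the left-hand side found in the first step, finishing the proof. I do not expect any genuine obstacle: the only care needed is the bookkeeping of these three coefficients, and it is worth remarking that the computation never uses the sign or magnitude of $\alpha$, so the identity indeed holds for every $\alpha\in\Re$ exactly as stated.
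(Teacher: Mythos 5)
Your computation is correct and complete: expanding both sides via the inner product and matching the coefficients $\alpha-\alpha(1-\alpha)=\alpha^2$ and $(1-\alpha)-\alpha(1-\alpha)=(1-\alpha)^2$ is exactly the standard argument, and the paper itself offers no proof beyond citing \cite[Corollary 2.14]{BC2011}, which rests on the same elementary expansion. Nothing to add.
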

\section{Explicit Golden Ratio Algorithm}\label{main}
In this section, we introduce an explicit golden ratio algorithm (EGRA) for solving the pseudomonotone equilibrium problem (EP) under a Lipschitz-type condition. 
Although, for the convergence of the algorithm, we assume that the bifunction $f$ satisfies Lipschitz-type condition, but the prior knowledge or even an estimate of 
Lipschitz-type constants is not necessary to be required. This  is particularly interesting when these constants are unknown or difficult to approximate.\\[.1in]
For displaying the new algorithm  and for the sake of simplicity, we use the notation 
$[t]_+=\max \left\{0,t\right\}$ and adopt the convention $\frac{0}{0}=+\infty$. Now our algorithm can be displayed in details as follows:\\
\noindent\rule{12.1cm}{0.4pt} 
\begin{algorithm}[EGRA for Equilibrium Problem].\label{alg1}\\
\noindent\rule{12.1cm}{0.4pt}\\
\textbf{Initialization:} Set $\varphi=\frac{1}{2}\,(\sqrt{5}+1)$. Choose $\bar{x}_{-1}\in \Re^m,~x_{-1},~x_0\in C$, $\lambda_0>0$ and $\mu\in \left(0,\frac{1}{2}\varphi\right)$.\\
\noindent\rule{12.1cm}{0.4pt}\\
\textbf{Iterative Steps:} Assume that $\bar{x}_{n-1}\in \Re^m, ~x_{n-1},~x_n\in C$ and $\lambda_n$ are known and calculate $x_{n+1}$ and $\lambda_{n+1}$ as follows:
$$
\left \{
\begin{array}{ll}
\bar{x}_n=\frac{(\varphi-1)x_n+\bar{x}_{n-1}}{\varphi},\\
x_{n+1}=\mbox{\rm prox}_{\lambda_n f(x_n,.)}(\bar{x}_n)\\
\lambda_{n+1}=\min \left\{\lambda_n,\frac{\mu (\|x_{n-1}-x_n\|^2+\|x_n-x_{n+1}\|^2)}{2\left[f(x_{n-1},x_{n+1})-f(x_{n-1},x_n)-f(x_n,x_{n+1})\right]_+}\right\}.
\end{array}
\right.
$$
\end{algorithm}
\noindent\rule{12.1cm}{0.4pt}
\begin{remark}\label{rem2}
For Algorithm \ref{alg1}, we can use the following stopping criterion:
\begin{center}
if $x_{n+1}=x_n=\bar{x}_n$ then stop: the iterate $x_n$ is a solution of problem (EP).
\end{center} 
In fact, this criterion comes from the definition of $x_{n+1}$ and from the Remark \ref{rem1}.
\end{remark}
\begin{remark}
 Contrary to the algorithm in \cite[Algorithm 3.1]{V2018} Algorithm \ref{alg1} does not require to know the values (even, the estimates) of the two Lipschitz-type 
constants associated to $f$ and is also without any linesearch procedure. The sequence of the stepsizes $\lambda_n$ can be suitably updated at each iteration by some 
cheap computations. 
\end{remark}

As Remark \ref{rem3} below, the sequence of stepsizes $\left\{\lambda_n\right\}$ generated by Algorithm \ref{alg1} is separated from $0$. Then, our algorithm 
can be more attractive than an algorithm with diminishing stepsizes or with a linesearch procedure which requires many computations over iteration with some 
stopping criterion, and of course this is time-consuming. 
\subsection{The convergence of EGRA}
In this part, we establish the convergence of algorithm EGRA. For that purpose, we consider the following standard assumptions imposed on the bifunction $f$:
\vskip 1mm

{\rm (A1)}\,  $f(x,x)=0$ for all $x\in C$ and $f$ is pseudomonotone on $C$;

{\rm (A2)}\, $f$ satisfies the Lipschitz-type condition on $C$;

{\rm (A3)}\, $f(.,y)$ is upper semicontinuous for each $y\in C$;

{\rm (A4)}\, $f(x,\cdot)$ is convex and subdifferentiable on $C$ for each $x\in C$.
\vskip 2mm
\begin{remark}\label{rem3}
Let $n$ be fixed. Since $f$ satisfies the Lipschitz-type condition, we obtain for all~$n$
\begin{eqnarray*}
f(x_{n-1},x_{n+1})-f(x_{n-1},x_n)-f(x_n,x_{n+1})&\le&c_1 \|x_{n-1}-x_n\|^2+c_2 \|x_n-x_{n+1}\|^2\\ 
&\le&\max\left\{c_1,c_2\right\}\left[ \|x_{n-1}-x_n\|^2+\|x_n-x_{n+1}\|^2\right].
\end{eqnarray*}
Hence, when $f(x_{n-1},x_{n+1})-f(x_{n-1},x_n)-f(x_n,x_{n+1})>0$, we can deduce that
$$ \min\bigg\{\lambda_n, \frac{\mu}{2 \max\{c_1,c_2\}}\bigg\} \leq \lambda_{n+1} \leq \lambda_n.$$
Since, by convention, $\lambda_{n+1}=\lambda_n$ when $f(x_{n-1},x_{n+1})-f(x_{n-1},x_n)-f(x_n,x_{n+1})\leq 0$, we obtain that the sequence $\{\lambda_n\}$ is nonincreasing and bounded below by $ \min\bigg\{\lambda_0, \frac{\mu}{2 \max\{c_1,c_2\}}\bigg\}$. Hence $\lambda_n \to \lambda >0$.
\end{remark}
\begin{remark}
A characteristic of the constant $\varphi=\frac{1}{2}\,(\sqrt{5}+1)$ in Algorithm \ref{alg1} is that 
$$
1+\frac{1}{\varphi}=\varphi\qquad \mbox{or}\qquad \varphi^2-\varphi-1=0.
$$
This is technically used in our analysis.
\end{remark}
We have the following convergence result.
\begin{theorem}\label{theo1}
Under assumptions {\rm (A1)-(A4)}, the sequence $\left\{x_n\right\}$ generated by Algorithm \ref{alg1} converges to some solution of problem (EP).
\end{theorem}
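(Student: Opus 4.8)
The plan is a Lyapunov (Fej\'er-type) analysis adapted to the golden-ratio averaging, the whole computation being driven by the identities $\varphi-1=\tfrac1\varphi$, $\varphi^{2}=\varphi+1$ and by $\mu<\tfrac\varphi2=\tfrac1{2(\varphi-1)}$. Fix a solution $x^{*}$ of (EP). The backbone is Lemma~\ref{prox} at the two successive prox-steps $x_{n+1}=\mathrm{prox}_{\lambda_n f(x_n,\cdot)}(\bar x_n)$, $x_n=\mathrm{prox}_{\lambda_{n-1}f(x_{n-1},\cdot)}(\bar x_{n-1})$. Testing the first at $y=x^{*}$ and using (A1) (from $f(x^{*},x_n)\ge0$ and pseudomonotonicity, $f(x_n,x^{*})\le0$) gives $\langle x_{n+1}-\bar x_n,x_{n+1}-x^{*}\rangle\le-\lambda_n f(x_n,x_{n+1})$, hence, by the identity $\|x_{n+1}-x^{*}\|^{2}=\|\bar x_n-x^{*}\|^{2}-\|x_{n+1}-\bar x_n\|^{2}+2\langle x_{n+1}-\bar x_n,x_{n+1}-x^{*}\rangle$,
$$\|x_{n+1}-x^{*}\|^{2}\le\|\bar x_n-x^{*}\|^{2}-\|x_{n+1}-\bar x_n\|^{2}-2\lambda_n f(x_n,x_{n+1}).$$
The term $-2\lambda_n f(x_n,x_{n+1})$ is controlled in two moves: first, (A2) together with the very definition of $\lambda_{n+1}$ yields $f(x_{n-1},x_{n+1})-f(x_{n-1},x_n)-f(x_n,x_{n+1})\le\tfrac{\mu}{2\lambda_{n+1}}\big(\|x_{n-1}-x_n\|^{2}+\|x_n-x_{n+1}\|^{2}\big)$ (also trivially true when the bracket is $\le0$); second, testing the $(n-1)$-st instance of Lemma~\ref{prox} at $y=x_{n+1}$ bounds $f(x_{n-1},x_n)-f(x_{n-1},x_{n+1})$ above by $\tfrac1{\lambda_{n-1}}\langle x_n-\bar x_{n-1},x_{n+1}-x_n\rangle$. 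I would then substitute $x_n-\bar x_{n-1}=\varphi(x_n-\bar x_n)$ and $\|x_n-\bar x_n\|^{2}=\tfrac1{\varphi^{2}}\|\bar x_{n-1}-x_n\|^{2}$ (from the averaging rule), apply the three-point identity to $\langle x_n-\bar x_n,x_{n+1}-x_n\rangle$, and use Lemma~\ref{eq} in the form $\|\bar x_n-x^{*}\|^{2}=\tfrac1\varphi\|\bar x_{n-1}-x^{*}\|^{2}+\tfrac1{\varphi^{2}}\|x_n-x^{*}\|^{2}-\tfrac1{\varphi^{3}}\|\bar x_{n-1}-x_n\|^{2}$.

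Carrying out this bookkeeping (and absorbing the ratios $\lambda_n/\lambda_{n-1}\le1$ and $\lambda_n/\lambda_{n+1}$, which is bounded with limit $1$ by Remark~\ref{rem3}, into lower-order terms) I expect to reach a descent inequality for
$$\Phi_n:=\|\bar x_{n-1}-x^{*}\|^{2}+(\varphi-1)\|x_n-x^{*}\|^{2}+(\varphi-2)\|\bar x_{n-1}-x_n\|^{2}+(\varphi-1)\mu\,\|x_{n-1}-x_n\|^{2}$$
of the shape $\Phi_{n+1}\le\Phi_n-(2-\varphi)\|\bar x_{n-1}-x_n\|^{2}-\big(1-2(\varphi-1)\mu\big)\|x_n-x_{n+1}\|^{2}$. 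Here $\varphi^{2}=\varphi+1$ is what makes the coefficients $(\varphi-2)$ and $(\varphi-1)\mu$ fall into place, and $\mu<\varphi/2$ is exactly what forces $1-2(\varphi-1)\mu>0$; one also checks, via the perfect-square identity $(\varphi-1)a^{2}-2(2-\varphi)ab+(2\varphi-3)b^{2}=(\varphi-1)\big(a-(\varphi-1)b\big)^{2}$, that $\Phi_n\ge0$.

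From the descent inequality $\{\Phi_n\}$ is nonincreasing and nonnegative, hence convergent, and $\sum_n\|\bar x_{n-1}-x_n\|^{2}<\infty$, $\sum_n\|x_n-x_{n+1}\|^{2}<\infty$; in particular $\|\bar x_{n-1}-x_n\|\to0$ and $\|x_n-x_{n+1}\|\to0$, so also $\|\bar x_n-x_n\|=\tfrac1\varphi\|\bar x_{n-1}-x_n\|\to0$ and $\|\bar x_n-x_{n+1}\|\to0$. Boundedness of $\{x_n\}$ I would get separately: the same estimates can be written as $\|x_{n+1}-x^{*}\|^{2}\le\|\bar x_n-x^{*}\|^{2}+s_n$ with $\sum_n s_n<\infty$, and $\|\bar x_n-x^{*}\|^{2}\le\tfrac1\varphi\|\bar x_{n-1}-x^{*}\|^{2}+\tfrac1{\varphi^{2}}\|x_n-x^{*}\|^{2}$, so since $\tfrac1\varphi+\tfrac1{\varphi^{2}}=1$ the quantity $u_n:=\max\{\|\bar x_{n-1}-x^{*}\|^{2},\|x_n-x^{*}\|^{2}\}$ obeys $u_{n+1}\le u_n+s_n$ and stays bounded. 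Since $\|\bar x_{n-1}-x_n\|\to0$ and $\|x_{n-1}-x_n\|\to0$ kill the last two terms of $\Phi_n$ and turn the first two into $\varphi\|x_n-x^{*}\|^{2}+o(1)$, the sequence $\big(\|x_n-x^{*}\|\big)_n$ converges for every solution $x^{*}$.

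The remaining step, which I expect to be the main obstacle, is to prove that every cluster point $z$ of $\{x_n\}$ (one exists, $C$ being closed and $\{x_n\}$ bounded, and $z\in C$) solves (EP). Along a subsequence $x_{n_k}\to z$ one also has $x_{n_k+1}\to z$ and $\bar x_{n_k}\to z$, and Lemma~\ref{prox} at step $n_k$ with arbitrary $y\in C$ gives
$$\lambda_{n_k}f(x_{n_k},y)\ge\lambda_{n_k}f(x_{n_k},x_{n_k+1})+\langle x_{n_k+1}-\bar x_{n_k},x_{n_k+1}-y\rangle,$$
where the inner product tends to $0$; the delicate point is the limit of $f(x_{n_k},x_{n_k+1})$. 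Using (A4), optimality of $x_{n+1}$ produces $w_n\in\partial_2 f(x_n,\cdot)(x_{n+1})$ and $\xi_n\in N_C(x_{n+1})$ with $\lambda_n w_n+x_{n+1}-\bar x_n+\xi_n=0$; as the iterates stay in a bounded set, the relevant subgradients of $f(x_n,\cdot)$ remain bounded (here one uses the regularity of $f$), which forces $f(x_{n_k},x_{n_k+1})\to0$, while $\xi_{n_k}=\bar x_{n_k}-x_{n_k+1}-\lambda_{n_k}w_{n_k}\to-\lambda\bar w$ with $\bar w$ a cluster point of $\{w_{n_k}\}$ and $\lambda=\lim\lambda_n>0$; by closedness of the graph of $N_C$, $-\lambda\bar w\in N_C(z)$, i.e.\ $\langle\bar w,y-z\rangle\ge0$. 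Passing to the limit in the subgradient inequality $f(x_{n_k},y)\ge f(x_{n_k},x_{n_k+1})+\langle w_{n_k},y-x_{n_k+1}\rangle$ and using (A3) then gives $f(z,y)\ge\langle\bar w,y-z\rangle\ge0$ for all $y\in C$, so $z$ is a solution. Finally a standard Opial-type argument upgrades this to full convergence: if $z_1,z_2$ are two cluster points, $\|x_n-z_1\|$ and $\|x_n-z_2\|$ both converge, hence so does $\langle x_n,z_1-z_2\rangle$, and evaluating along subsequences tending to $z_1$ and to $z_2$ forces $\|z_1-z_2\|^{2}=0$; thus $\{x_n\}$ has a unique cluster point and converges to it.
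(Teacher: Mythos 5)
Your Lyapunov analysis is essentially the paper's own argument: the same two prox inequalities (Lemma \ref{prox} at steps $n$ and $n-1$, the latter tested at $x_{n+1}$), the same use of the stepsize rule to absorb the Lipschitz-type bracket, the same golden-ratio identities, and the same Opial-type ending. The paper's Lyapunov quantity is $a_n=\tfrac{\varphi}{\varphi-1}\|\bar{x}_n-x^*\|^2+\tfrac{\mu\lambda_n}{\lambda_{n+1}}\|x_{n-1}-x_n\|^2$, which is a repackaging of your $\Phi_n$ via Lemma \ref{eq}; that part of your plan is sound, provided (as you note) the constants are only claimed for $n\ge n_0$ since $\lambda_n/\lambda_{n+1}\to 1$ rather than being $\le 1$.

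The genuine gap is in your identification of cluster points as solutions. You extract $w_{n_k}\in\partial_2 f(x_{n_k},\cdot)(x_{n_k+1})$ and $\xi_{n_k}\in N_C(x_{n_k+1})$ from the optimality condition $\lambda_{n_k}w_{n_k}+x_{n_k+1}-\bar{x}_{n_k}+\xi_{n_k}=0$ and then assert that ``the relevant subgradients remain bounded (here one uses the regularity of $f$).'' No such regularity is among (A1)--(A4): (A4) only gives subdifferentiability of $f(x,\cdot)$ pointwise, and a family of convex functions indexed by $x_{n_k}$ need not have uniformly bounded subdifferentials, especially at points on the boundary of $C$. Nor can boundedness of $w_{n_k}$ be read off the optimality condition, because $N_C(x_{n_k+1})$ is a cone and $\xi_{n_k}$ can absorb arbitrarily large vectors. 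Both your claims $f(x_{n_k},x_{n_k+1})\to 0$ and $w_{n_k}\to\bar{w}$ hang on this unproved boundedness, so the step as written would fail. The root cause is that you specialized the first prox inequality to $y=x^*$ too early. The paper keeps the test vector $x\in C$ arbitrary throughout, so its final descent inequality (\ref{e14}) directly yields
\begin{equation*}
f(x_n,x)\ \ge\ \frac{\varphi}{2\lambda_n(\varphi-1)}\left(\|\bar{x}_{n+1}-x\|^2-\|\bar{x}_n-x\|^2\right)-\frac{\mu}{2\lambda_{n+1}}\|x_{n-1}-x_n\|^2+\frac{\mu\lambda_{n+1}}{2\lambda_n\lambda_{n+2}}\|x_n-x_{n+1}\|^2
\end{equation*}
for every $x\in C$; along the subsequence the right-hand side tends to $0$ (since $\bar{x}_m,\bar{x}_{m+1}\to p$ and the difference terms vanish), and (A3) alone gives $f(p,x)\ge\limsup_m f(x_m,x)\ge 0$. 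If you keep $x$ general in your backbone inequality and postpone the use of pseudomonotonicity to the moment you set $x=x^*$, your argument closes without any appeal to subgradient boundedness.
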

\begin{proof}
Lemma \ref{prox} and the definition of $x_{n+1}$ ensure that 
\begin{equation}\label{eq:1}
\left\langle \bar{x}_n-x_{n+1}, x-x_{n+1}\right\rangle \le \lambda_n \left( f(x_n,x)-f(x_n,x_{n+1})\right), ~\forall x\in C
\end{equation}
which, by multiplying both sides by $2$, follows that
\begin{equation}\label{eq:2}
2\left\langle \bar{x}_n-x_{n+1}, x-x_{n+1}\right\rangle \le 2\lambda_n \left( f(x_n,x)-f(x_n,x_{n+1})\right).
\end{equation}
Using the equality $2 \left\langle a,b\right\rangle=\|a\|^2+\|b\|^2-\|a-b\|^2$ for $a=\bar{x}_n-x_{n+1}$ and $b=x-x_{n+1}$, we obtain from the relation (\ref{eq:2}) that
\begin{eqnarray}
\|\bar{x}_n-x_{n+1}\|^2+\|x_{n+1}-x\|^2-\|\bar{x}_n-x\|^2\le 2\lambda_n \left( f(x_n,x)-f(x_n,x_{n+1})\right), ~\forall x\in C. \label{eq:3}
\end{eqnarray}
Similarly, it follows from relation (\ref{eq:1}) with $n:=n-1$ that
\begin{equation}\label{eq:4}
\left\langle \bar{x}_{n-1}-x_n, x-x_n\right\rangle \le \lambda_{n-1} \left( f(x_{n-1},x)-f(x_{n-1},x_n)\right), ~\forall x\in C,
\end{equation}
and, with $x=x_{n+1}$, that
\begin{equation}\label{eq:5}
\left\langle \bar{x}_{n-1}-x_n, x_{n+1}-x_n\right\rangle \le \lambda_{n-1} \left( f(x_{n-1},x_{n+1})-f(x_{n-1},x_n)\right).
\end{equation}
Since, by definition of $\bar{x}_n$, we have $\bar{x}_{n-1}-x_n=\varphi (\bar{x}_n-x_n)$, relation (\ref{eq:5}) implies that 
$$
\varphi\left\langle \bar{x}_n-x_n, x_{n+1}-x_n\right\rangle \le \lambda_{n-1} \left( f(x_{n-1},x_{n+1})-f(x_{n-1},x_n)\right).
$$
Now, multiplying both sides of the last inequality by $\frac{2\lambda_n}{\lambda_{n-1}}>0$, we obtain
\begin{equation}\label{eq:6}
\frac{\varphi\lambda_n}{\lambda_{n-1}}2\left\langle \bar{x}_n-x_n, x_{n+1}-x_n\right\rangle \le 2\lambda_n \left( f(x_{n-1},x_{n+1})-f(x_{n-1},x_n)\right).
\end{equation}
Thus, from relation (\ref{eq:6}) and the equality
$$2\left\langle \bar{x}_n-x_n, x_{n+1}-x_n\right\rangle=\|\bar{x}_n-x_n\|^2+ \|x_{n+1}-x_n\|^2- \|x_{n+1}-\bar{x}_n\|^2,$$
we deduce the following inequality,
\begin{equation}\label{eq:7}
\frac{\varphi\lambda_n}{\lambda_{n-1}} \left[\|\bar{x}_n-x_n\|^2+ \|x_{n+1}-x_n\|^2- \|x_{n+1}-\bar{x}_n\|^2\right] \le 2\lambda_n \left( f(x_{n-1},x_{n+1})-f(x_{n-1},x_n)\right).
\end{equation}
Summing up both sides of relations (\ref{eq:3}) and (\ref{eq:7}) and using the definition of $\lambda_{n+1}$, we obtain
\begin{eqnarray}
&\|x_{n+1}-x\|^2-\|\bar{x}_n-x\|^2+(1-\frac{\varphi\lambda_n}{\lambda_{n-1}})\|x_{n+1}-\bar{x}_n\|^2+\frac{\varphi\lambda_n}{\lambda_{n-1}} \|\bar{x}_n-x_n\|^2
+\frac{\varphi\lambda_n}{\lambda_{n-1}} \|x_{n+1}-x_n\|^2\nonumber\\ 
&\le 2\lambda_n f(x_n,x)- 2\lambda_n \left[f(x_{n-1},x_n) + f(x_n,x_{n+1}) - f(x_{n-1},x_{n+1})\right]\nonumber\\ 
&\le 2\lambda_n f(x_n,x)+ \frac{\mu\lambda_n}{\lambda_{n+1}} \left(\|x_{n-1}-x_n\|^2 +  \|x_n - x_{n+1}\|^2\right),\label{eq:8}
\end{eqnarray}
which can be rewritten as
\begin{eqnarray}
\|x_{n+1}-x\|^2&-&\|\bar{x}_n-x\|^2+(1-\frac{\varphi\lambda_n}{\lambda_{n-1}})\|x_{n+1}-\bar{x}_n\|^2+\frac{\varphi\lambda_n}{\lambda_{n-1}} \|\bar{x}_n-x_n\|^2\nonumber\\
&&\hspace*{4cm}+\left(\frac{\varphi\lambda_n}{\lambda_{n-1}}-\frac{\mu\lambda_n}{\lambda_{n+1}}\right) \|x_{n+1}-x_n\|^2\nonumber\\ 
&\le& 2\lambda_n f(x_n,x)+ \frac{\mu\lambda_n}{\lambda_{n+1}} \|x_{n-1}-x_n\|^2,~\forall x\in C.\label{eq:8*}
\end{eqnarray}
Thus
\begin{eqnarray}
&&\|x_{n+1}-x\|^2+\left(\frac{\varphi\lambda_n}{\lambda_{n-1}}-\frac{\mu\lambda_n}{\lambda_{n+1}}\right) \|x_{n+1}-x_n\|^2\le \|\bar{x}_n-x\|^2+\frac{\mu\lambda_n}{\lambda_{n+1}} \|x_{n-1}-x_n\|^2\nonumber\\ 
&&\hspace*{2.5cm}-(1-\frac{\varphi\lambda_n}{\lambda_{n-1}})\|x_{n+1}-\bar{x}_n\|^2-\frac{\varphi\lambda_n}{\lambda_{n-1}} \|\bar{x}_n-x_n\|^2+2\lambda_n f(x_n,x).\label{eq:9}
\end{eqnarray}
Since $\bar{x}_{n+1}=\frac{(\varphi-1)x_{n+1}+\bar{x}_{n}}{\varphi}$, we obtain immediately that
$$
x_{n+1}=\frac{\varphi}{\varphi-1}\bar{x}_{n+1}-\frac{1}{\varphi-1}\bar{x_n}=(1+\frac{1}{\varphi-1})\bar{x}_{n+1}-\frac{1}{\varphi-1}\bar{x_n}.
$$
Applying Lemma \ref{eq}, we come to the following equality,
\begin{eqnarray}\label{eq:10}
\|x_{n+1}-x\|^2&=&\frac{\varphi}{\varphi-1}\|\bar{x}_{n+1}-x\|^2-\frac{1}{\varphi-1}\|\bar{x}_n-x\|^2+\frac{\varphi}{(\varphi-1)^2}\|\bar{x}_{n+1}-\bar{x}_n\|^2\nonumber\\ 
&=&\frac{\varphi}{\varphi-1}\|\bar{x}_{n+1}-x\|^2-\frac{1}{\varphi-1}\|\bar{x}_n-x\|^2+\frac{1}{\varphi}\|x_{n+1}-\bar{x}_n\|^2, 
\end{eqnarray}
where the last equality follows from the fact 
$$\|\bar{x}_{n+1}-\bar{x}_n\|^2=\left\|\frac{(\varphi-1)x_{n+1}+\bar{x}_{n}}{\varphi} - \bar{x}_n \right\|^2=\frac{(\varphi-1)^2}{\varphi^2}||x_{n+1}-\bar{x}_n||^2.$$
Combining relations (\ref{eq:9}) and (\ref{eq:10}), we obtain
\begin{eqnarray}
&\frac{\varphi}{\varphi-1}\|\bar{x}_{n+1}-x\|^2+\left(\frac{\varphi\lambda_n}{\lambda_{n-1}}-\frac{\mu\lambda_n}{\lambda_{n+1}}\right) \|x_n - x_{n+1}\|^2\le \frac{\varphi}{\varphi-1} \|\bar{x}_n-x\|^2+ \frac{\mu\lambda_n}{\lambda_{n+1}} \|x_{n-1}-x_n\|^2\nonumber\\ 
&\hspace*{1cm}-(1+\frac{1}{\varphi}-\frac{\varphi\lambda_n}{\lambda_{n-1}})\|x_{n+1}-\bar{x}_n\|^2-\frac{\varphi\lambda_n}{\lambda_{n-1}} \|\bar{x}_n-x_n\|^2+2\lambda_n f(x_n,x),~\forall x\in C.\label{eq:11}
\end{eqnarray}
Note that $\left\{\lambda_n\right\}$ is non-increasing, i.e., $\lambda_n\le \lambda_{n-1}$ for all $n\ge 1$. Then
\begin{equation}\label{e12}
1+\frac{1}{\varphi}-\frac{\varphi\lambda_n}{\lambda_{n-1}}\ge 1+\frac{1}{\varphi}-\frac{\varphi\lambda_{n-1}}{\lambda_{n-1}}=1+\frac{1}{\varphi}-\varphi=0,~\forall n\ge 1.
\end{equation}
Moreover, since $\lambda_n\to \lambda>0$ and $0<\mu<\frac{1}{2}\varphi$, we obtain 
\begin{equation*}
\lim_{n\to\infty}\left(\frac{\varphi\lambda_n}{\lambda_{n-1}}-\frac{\mu\lambda_n}{\lambda_{n+1}}-\frac{\mu\lambda_{n+1}}{\lambda_{n+2}}\right)=\varphi-2\mu>0.
\end{equation*}
Hence, there exists $n_0\ge 1$ such that 
$$
\frac{\varphi\lambda_n}{\lambda_{n-1}}-\frac{\mu\lambda_n}{\lambda_{n+1}}-\frac{\mu\lambda_{n+1}}{\lambda_{n+2}}>0,~\forall n\ge n_0
$$
i.e., 
\begin{equation}\label{e13}
\frac{\varphi\lambda_n}{\lambda_{n-1}}-\frac{\mu\lambda_n}{\lambda_{n+1}}>\frac{\mu\lambda_{n+1}}{\lambda_{n+2}},~\forall n\ge n_0.
\end{equation}
Combining the relations (\ref{eq:11})-(\ref{e13}), we obtain for all $x\in C$ and $n\ge n_0$ that
\begin{eqnarray}
\frac{\varphi}{\varphi-1}\|\bar{x}_{n+1}-x\|^2+\frac{\mu\lambda_{n+1}}{\lambda_{n+2}} \|x_n - x_{n+1}\|^2&\le &\frac{\varphi}{\varphi-1} \|\bar{x}_n-x\|^2+ \frac{\mu\lambda_n}{\lambda_{n+1}} \|x_{n-1}-x_n\|^2\nonumber\\ 
&&-\frac{\varphi\lambda_n}{\lambda_{n-1}} \|\bar{x}_n-x_n\|^2+2\lambda_n f(x_n,x).\label{e14}
\end{eqnarray}
Note that for each $x^*\in EP(f,C)$, we have that $f(x^*,x_n)\ge 0$ because $x_n\in C$. Hence, from the pseudomonotonicity of $f$, we derive  $f(x_n,x^*)\le 0$. 
Now, using relation (\ref{e14}) for $x=x^*\in C$ and setting
$$a_n=\frac{\varphi}{\varphi-1} \|\bar{x}_n-x^*\|^2+ \frac{\mu\lambda_n}{\lambda_{n+1}} \|x_{n-1}-x_n\|^2,$$
$$ b_n=\frac{\varphi\lambda_n}{\lambda_{n-1}} \|\bar{x}_n-x_n\|^2,$$
we deduce that
\begin{equation}\label{e15}
a_{n+1}\le a_n-b_n, ~\forall n\ge n_0.
\end{equation}
Thus, the limit of $\left\{a_n\right\}_{n\ge n_0}$ exists and $\lim\limits_{n\to\infty}b_n=0$. Hence, the sequences $\left\{\bar{x}_n\right\}$ and $\left\{x_n\right\}$ are bounded. Moreover, from the definition of $b_n$ and $\lambda_n\to\lambda>0$, we obtain
\begin{equation}\label{e16}
\lim_{n\to\infty}\|\bar{x}_n-x_n\|^2=0.
\end{equation}
Consequently, since $\bar{x}_{n-1}-x_n=\varphi (\bar{x}_n-x_n)$, we get 
\begin{equation}\label{e17}
\lim_{n\to\infty}\|\bar{x}_{n-1}-x_n\|^2=0.
\end{equation}
From the relations (\ref{e16}) and (\ref{e17}), we have 
\begin{equation}\label{e18}
\lim_{n\to\infty}\|\bar{x}_{n}-\bar{x}_{n-1}\|^2=0.
\end{equation}
Also, from (\ref{e17}), we have that $\|\bar{x}_{n}-x_{n+1}\|^2\to 0$ which together with (\ref{e16}) implies that 
\begin{equation}\label{e19}
\lim_{n\to\infty}\|x_{n+1}-x_n\|^2=0.
\end{equation}
Now, assume that $p$ is a cluster point of the sequence $\left\{x_n\right\}$, i.e., there exists a subsequence of $\left\{x_n\right\}$, denoted by $\left\{x_m\right\}$, which
converges to $p$. We will prove that $p\in EP(f,C)$. Indeed, it follows from relation (\ref{e14}) that 
\begin{eqnarray}
f(x_n,x)&\ge& \frac{\varphi}{2\lambda_n(\varphi-1)} \left(\|\bar{x}_{n+1}-x\|^2-\|\bar{x}_n-x\|^2\right)\nonumber\\ 
&&\hspace*{2cm}- \frac{\mu}{2\lambda_{n+1}} \|x_{n-1}-x_n\|^2+\frac{\mu\lambda_{n+1}}{2\lambda_n\lambda_{n+2}} \|x_n - x_{n+1}\|^2.\label{e20}
\end{eqnarray}
Passing to the limit in the last inequality as $n=m\to \infty$ and using the upper semicontinuity of $f(.,x)$, the relation (\ref{e19}), and the limit $\lambda_n\to\lambda$, we obtain 
\begin{eqnarray}
f(p,x)\ge \limsup_{m\to\infty}f(x_m,x)\ge \frac{\varphi}{2\lambda(\varphi-1)}\lim_{m\to \infty} \left(\|\bar{x}_{m+1}-x\|^2-\|\bar{x}_m-x\|^2\right),~\forall x\in C.
\label{e20}
\end{eqnarray}
Note that from the relations (\ref{e16}) and (\ref{e18}), we also obtain that $\bar{x}_m,~\bar{x}_{m+1} \to p$ as $m\to\infty$. Thus
\begin{eqnarray}
\lim_{m\to\infty}\left|\|\bar{x}_m-x\|^2-\|\bar{x}_{m+1}-x\|^2\right|=0,~\forall x\in C.\label{e22}
\end{eqnarray}
Combining the relations (\ref{e20}) and (\ref{e22}), we get $f(p,x)\ge 0$ for all $x\in C$. Thus $p\in EP(f,C)$. 
To finish the proof, we prove that the whole sequence $\left\{x_n\right\}$ converges to $p$ as $n\to\infty$. Indeed, assume 
that $\left\{x_l\right\}$ is another subsequence of $\left\{x_n\right\}$ converging to $\bar{p}\ne p$. As mentioned above, we have that 
$\bar{p}\in EP(f,C)$. The fact $\lim\limits_{n\to\infty}a_n\in \Re$ and the relation (\ref{e19}) ensure that $\lim\limits_{n\to\infty}\|\bar{x}_n-x^*\|^2\in \Re$, 
and thus $\lim\limits_{n\to\infty}\|x_n-x^*\|^2\in \Re$ for each $x^*\in EP(f,C)$. On the other hand, we have
$$
2 \left\langle x_n,p-\bar{p}\right\rangle = \|x_n-\bar{p}\|^2-\|x_n-p\|^2+\|p\|^2-\|\bar{p}\|^2.
$$
Thus, since $\lim\limits_{n\to\infty}\|x_n-p\|^2\in \Re$ and $\lim\limits_{n\to\infty}\|x_n-\bar{p}\|^2\in \Re$, we obtain that $\lim\limits_{n\to\infty}\left\langle x_n,p-\bar{p}\right\rangle\in \Re$. Setting 
\begin{equation}\label{h31}
\lim\limits_{n\to\infty}\left\langle x_n,p-\bar{p}\right\rangle=M
\end{equation}
and passing to the limit in (\ref{h31}) as $n=k,~l\to \infty$, we obtain 
$$
\left\langle p,p-\bar{p}\right\rangle=\lim\limits_{k\to\infty}\left\langle x_k,p-\bar{p}\right\rangle=M=\lim\limits_{l\to\infty}\left\langle x_l,p-\bar{p}\right\rangle =\left\langle \bar{p},p-\bar{p}\right\rangle.
$$
Thus, $\|p-\bar{p}\|^2=0$ and $\bar{p}=p$. This completes the proof.
\end{proof}
\begin{remark}
The obtained result in Theorem \ref{theo1} can be extended to an infinite dimensional Hilbert space $H$. In that case, hypothesis (A3) is replaced by the sequentially 
weakly upper semicontinuity of $f(.,y)$ on $C$ for each $y\in C$, and the sequence $\left\{x_n\right\}$ generated by Algorithm~\ref{alg1} converges weakly to some solution of problem (EP).
\end{remark}
\subsection{The convergence rate of the Explicit Golden Ratio Algorithm}
In this subsection, we study the convergence rate of algorithm EGRA under the assumption that the bifunction $f$ is strongly 
pseudomonotone (SP) and satisfies the Lipschitz-type condition (LC). Recently, Vinh has proposed in \cite[Algorithm 4.1]{V2018}
a strongly convergent algorithm for solving problem (EP) in a Hilbert space under the conditions (SP) and (LC). His algorithm  
uses a decreasing and non-summable sequence of stepsizes.  However, it is easy to see that such an algorithm  cannot linearly converge. 
Contrary to \cite{V2018}, our algorithm uses an explicit formula to calculate the steplength of the iterates. This strategy will allow us to 
establish the $R$-linear rate of convergence (at least) of  Algorithm \ref{alg1}. Furthermore, in addition to the assumptions (SP) and (LC), 
we will also impose that for each $y\in C$, the function $f(\cdot,y)$ 
is convex, lower semicontinuous and for each $x\in C$, the function $f(x,\cdot)$ is hemicontinuous on $C$. Under these assumptions, 
problem (EP) has a unique solution denoted by $x^\dagger$ (see, e.g., \cite[Proposition 1]{MQ15}).\\[.1in]
Let us recall two fundamental concepts of convergence rate in \cite[Chapter 9]{OR70}. A sequence $\left\{x_n\right\}$ in $H$ converges in norm to $x^*\in H$. 
We say that \\[.1in]
(a) $\left\{x_n\right\}$ converges to $x^*$ with $R$-linear convergence rate if
$$
\limsup_{n\to\infty}||x_n-x^*||^{1/n}<1, 
$$
(b) $\left\{x_n\right\}$ converges to $x^*$ with $Q$-linear convergence rate if there exists $\mu \in (0,1)$ such that
$$
||x_{n+1}-x^*||\le \mu ||x_n-x^*||, 
$$
for all sufficiently large $n$. \\[.1in]
Note that $Q$-linear convergence rate implies $R$-linear convergence rate, see \cite[Section 9.3]{OR70}. The inverse in general is not true. 
Algorithm \ref{alg1} has the following rate of convergence.
\vskip 2mm

\begin{theorem}\label{theo2}
Under the assumptions (SP) and (LC), the sequence $\left\{x_n\right\}$ generated by Algorithm \ref{alg1} converges at least $R$-linearly to 
the unique solution $x^\dagger$ of problem (EP).
\end{theorem}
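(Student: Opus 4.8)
The plan is to bootstrap the recursive estimate (\ref{e14}) established in the proof of Theorem~\ref{theo1}, which holds for every $x\in C$ and every $n\ge n_0$. Under (SP), for the (unique) solution $x^\dagger$ of (EP) we have $f(x^\dagger,x_n)\ge 0$ since $x_n\in C$, hence $f(x_n,x^\dagger)\le -\gamma\|x_n-x^\dagger\|^2$. Substituting $x=x^\dagger$ into (\ref{e14}) and using this bound yields, with the same quantity $a_n=\frac{\varphi}{\varphi-1}\|\bar{x}_n-x^\dagger\|^2+\frac{\mu\lambda_n}{\lambda_{n+1}}\|x_{n-1}-x_n\|^2$ as before,
\begin{equation*}
a_{n+1}\le a_n-b_n,\qquad b_n:=\frac{\varphi\lambda_n}{\lambda_{n-1}}\|\bar{x}_n-x_n\|^2+2\gamma\lambda_n\|x_n-x^\dagger\|^2\ge 0 .
\end{equation*}
In particular $\{a_n\}$ is nonincreasing and $b_n\le a_n-a_{n+1}\le a_n$.

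The core step is to show that $a_n$ is, up to a constant independent of $n$, dominated by $b_n+b_{n-1}$. I would estimate $\|\bar{x}_n-x^\dagger\|^2\le 2\|\bar{x}_n-x_n\|^2+2\|x_n-x^\dagger\|^2$ and, using the identity $\bar{x}_{n-1}-x_n=\varphi(\bar{x}_n-x_n)$ together with $\|x_{n-1}-x_n\|^2\le 2\|\bar{x}_{n-1}-x_{n-1}\|^2+2\|\bar{x}_{n-1}-x_n\|^2$, bound every term of $a_n$ by $\|\bar{x}_n-x_n\|^2$, $\|x_n-x^\dagger\|^2$ and $\|\bar{x}_{n-1}-x_{n-1}\|^2$. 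Reading off the definitions of $b_n$ and $b_{n-1}$, the first two quantities are at most a constant times $b_n$ and the third at most a constant times $b_{n-1}$; here it is essential that $\lambda_n\to\lambda>0$ (Remark~\ref{rem3}), so that all stepsize ratios $\lambda_i/\lambda_j$ that appear are bounded above and below by positive constants. This produces a constant $K>0$, independent of $n$, with $a_n\le K(b_n+b_{n-1})$ for all $n\ge n_0+1$.

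Combining the two displays gives $\tfrac1K a_n\le b_n+b_{n-1}\le (a_n-a_{n+1})+(a_{n-1}-a_n)=a_{n-1}-a_{n+1}$, so that, since $\{a_n\}$ is nonincreasing,
\begin{equation*}
a_{n+1}\le a_{n-1}-\tfrac1K a_n\le a_{n-1}-\tfrac1K a_{n+1},\qquad\text{hence}\qquad a_{n+1}\le\frac{K}{K+1}\,a_{n-1}.
\end{equation*}
Iterating separately over even and odd indices yields $a_n\le C\,q^{\,n}$ with $q=(K/(K+1))^{1/2}\in(0,1)$ and some constant $C>0$. Finally, $2\gamma\lambda\|x_n-x^\dagger\|^2\le 2\gamma\lambda_n\|x_n-x^\dagger\|^2\le b_n\le a_n$ gives $\|x_n-x^\dagger\|\le C'q^{\,n/2}$, whence $\limsup_{n\to\infty}\|x_n-x^\dagger\|^{1/n}\le q^{1/2}<1$; that is, $\{x_n\}$ converges $R$-linearly to $x^\dagger$.

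I expect the main obstacle to be the middle paragraph, namely producing the bound $a_n\le K(b_n+b_{n-1})$ with a genuinely uniform constant, which forces one to track the stepsize ratios carefully and to invoke their two-sided boundedness (itself a consequence of $\lambda_n\to\lambda>0$). It is worth stressing that this scheme only delivers $R$-linear, not $Q$-linear, convergence: the contraction links $a_{n+1}$ to $a_{n-1}$ rather than to $a_n$, so the Lyapunov quantity need not decrease at each single step, only over consecutive pairs of steps.
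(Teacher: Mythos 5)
Your proposal is correct, but it resolves the recursion in a genuinely different way from the paper. The paper substitutes $x=x^\dagger$ into (\ref{eq:11}), applies (SP), and then uses the identity $x_n=\frac{\varphi}{\varphi-1}\bar{x}_n-\frac{1}{\varphi-1}\bar{x}_{n-1}$ together with Lemma~\ref{eq} to convert $-2\gamma\lambda_n\|x_n-x^\dagger\|^2$ into a combination of $\|\bar{x}_n-x^\dagger\|^2$ and $\|\bar{x}_{n-1}-x^\dagger\|^2$; this produces the three-term linear recursion (\ref{h40}) for $\Gamma_n=\frac{\varphi}{\varphi-1}\|\bar{x}_n-x^\dagger\|^2$, which is then linearized by choosing $r_1,r_2$ with $1-\alpha-r_2+r_1=0$ and $\frac{\theta\alpha}{\varphi}-r_1r_2=0$, yielding the $Q$-linear contraction $\Gamma_{n+1}+r_1\Gamma_n+\Xi_{n+1}\le\epsilon\left(\Gamma_n+r_1\Gamma_{n-1}+\Xi_n\right)$ with $\epsilon=\max\{1/\rho,r_2\}<1$. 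You instead keep $\|x_n-x^\dagger\|^2$ inside the decrement $b_n$ and prove the ``energy dominated by two consecutive decrements'' bound $a_n\le K(b_n+b_{n-1})$, which combined with $b_n\le a_n-a_{n+1}$ gives the two-step contraction $a_{n+1}\le\frac{K}{K+1}a_{n-1}$. Both arguments are sound and both deliver only $R$-linear, not $Q$-linear, convergence. Your route is more elementary (no characteristic roots, no auxiliary parameters $\rho,\theta$, no monotonicity analysis of the function $h$) and it finishes more cleanly: the direct estimate $2\gamma\lambda\|x_n-x^\dagger\|^2\le b_n\le a_n$ gives the rate for $\|x_n-x^\dagger\|$ immediately, whereas the paper first gets $R$-linear decay of $\|x_n-x_{n-1}\|$ and must implicitly sum a geometric series to reach $x^\dagger$. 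The price is the uniform constant $K$, but as you observe it is genuinely uniform: Remark~\ref{rem3} gives $\lambda\le\lambda_n\le\lambda_0$, so every stepsize ratio lies in $[\lambda/\lambda_0,\lambda_0/\lambda]$, and the coefficient $2\gamma\lambda_n\ge 2\gamma\lambda>0$ of $\|x_n-x^\dagger\|^2$ in $b_n$ is bounded away from zero. Two trivial bookkeeping points: the chain $b_n+b_{n-1}\le a_{n-1}-a_{n+1}$ requires $n\ge n_0+1$ so that (\ref{e14}) is available at both indices, and (\ref{e14}) itself is legitimately usable here since its derivation relies only on the algorithm, the Lipschitz-type condition and $\lambda_n\to\lambda>0$.
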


\begin{proof}
Using the relation (\ref{eq:11}) with $x=x^\dagger \in C$, we obtain
\begin{eqnarray}
&\frac{\varphi}{\varphi-1}\|\bar{x}_{n+1}-x^\dagger\|^2+\left(\frac{\varphi\lambda_n}{\lambda_{n-1}}-\frac{\mu\lambda_n}{\lambda_{n+1}}\right) \|x_n - x_{n+1}\|^2\le \frac{\varphi}{\varphi-1} \|\bar{x}_n-x^\dagger\|^2+ \frac{\mu\lambda_n}{\lambda_{n+1}} \|x_{n-1}-x_n\|^2\nonumber\\ 
&-(1+\frac{1}{\varphi}-\frac{\varphi\lambda_n}{\lambda_{n-1}})\|x_{n+1}-\bar{x}_n\|^2-\frac{\varphi\lambda_n}{\lambda_{n-1}} \|\bar{x}_n-x_n\|^2+2\lambda_n f(x_n,x^\dagger),~\forall n\ge 1.\label{h32}
\end{eqnarray}
Note that $\lambda_n\le \lambda_{n-1}$ for all $n\ge 1$. Thus, $1+\frac{1}{\varphi}-\frac{\varphi\lambda_n}{\lambda_{n-1}}\ge 1+\frac{1}{\varphi}-\varphi=0$. This 
together with the relation (\ref{h32}) and the strong pseudomonotonicity of $f$ implies that 
\begin{eqnarray}
\frac{\varphi}{\varphi-1}\|\bar{x}_{n+1}-x^\dagger\|^2&+&\left(\frac{\varphi\lambda_n}{\lambda_{n-1}}-\frac{\mu\lambda_n}{\lambda_{n+1}}\right) \|x_n - x_{n+1}\|^2\nonumber\\ 
&\le& \frac{\varphi}{\varphi-1} \|\bar{x}_n-x^\dagger\|^2+ \frac{\mu\lambda_n}{\lambda_{n+1}} \|x_{n-1}-x_n\|^2 + 2\lambda_n f(x_n,x^\dagger)\nonumber\\
&\le& \frac{\varphi}{\varphi-1} \|\bar{x}_n-x^\dagger\|^2+ \frac{\mu\lambda_n}{\lambda_{n+1}} \|x_{n-1}-x_n\|^2-2\lambda_n \gamma\, \|x_n-x^\dagger\|^2,\label{h33}
\end{eqnarray}
where $\gamma$ is the modulus of strong pseudomonotonicity of $f$. Note that from the definition of $\bar{x}_n$, we obtain
$
x_n=\frac{\varphi}{\varphi-1}\bar{x}_n-\frac{1}{\varphi-1}\bar{x}_{n-1}.
$
Thus, it follows from Lemma \ref{eq} that
\begin{eqnarray}\label{h34}
\|x_n-x^\dagger\|^2&=&\frac{\varphi}{\varphi-1}\|\bar{x}_n-x^\dagger\|^2-\frac{1}{\varphi-1}\|\bar{x}_{n-1}-x^\dagger\|^2+\frac{\varphi}{(\varphi-1)^2}\|\bar{x}_n-\bar{x}_{n-1}\|^2\nonumber\\
&\ge&\frac{\varphi}{\varphi-1}\|\bar{x}_n-x^\dagger\|^2-\frac{1}{\varphi-1}\|\bar{x}_{n-1}-x^\dagger\|^2.
\end{eqnarray}
From the relations (\ref{h33}) and (\ref{h34}), we see that
\begin{eqnarray}
\frac{\varphi}{\varphi-1}\|\bar{x}_{n+1}-x^\dagger\|^2&+&\left(\frac{\varphi\lambda_n}{\lambda_{n-1}}-\frac{\mu\lambda_n}{\lambda_{n+1}}\right)\|x_{n+1}-x_n\|^2\le\frac{\varphi}{\varphi-1}(1-2\gamma\lambda_n)\|\bar{x}_n-x^\dagger\|^2\nonumber\\
&&+\frac{2\gamma\lambda_n}{\varphi-1}\|\bar{x}_{n-1}-x^\dagger\|^2+\frac{\mu\lambda_n}{\lambda_{n+1}} \|x_n-x_{n-1}\|^2.\label{h34*}
\end{eqnarray}
Let $\rho$ and $\theta$ be two real numbers such that 
\begin{equation}\label{h35}
1<\rho<\frac{\varphi}{\mu}-1\qquad \mbox{and}\qquad 1<\theta<\varphi.
\end{equation}
Thus, from the fact $\lim\limits_{n\to\infty}\lambda_n=\lambda>0$, we have
\begin{eqnarray*}
&&\lim_{n\to\infty}\left(\frac{\varphi\lambda_n}{\lambda_{n-1}}-\frac{\mu\lambda_n}{\lambda_{n+1}}-\frac{\mu\rho\lambda_{n+1}}{\lambda_{n+2}}\right)=\varphi-(1+\rho)\mu>0,\\ 
&&\lim_{n\to\infty}\frac{2\gamma\lambda_n}{\varphi-1}=\frac{2\gamma\lambda}{\varphi-1}<\frac{2\gamma\lambda\theta}{\varphi-1}.
\end{eqnarray*}
These limits imply that there exists $n_0\ge 1$ such that 
\begin{eqnarray}
&&\frac{\varphi\lambda_n}{\lambda_{n-1}}-\frac{\mu\lambda_n}{\lambda_{n+1}}>\frac{\mu\rho\lambda_{n+1}}{\lambda_{n+2}},~\forall n\ge n_0,\label{h36}\\
&&\frac{2\gamma\lambda_n}{\varphi-1}<\frac{2\gamma\lambda\theta}{\varphi-1},~\forall n\ge n_0.\label{h37}
\end{eqnarray}
Moreover, since $\left\{\lambda_n\right\}$ is non-increasing and $\lambda_n\to \lambda$, we obtain that $\lambda_n\ge \lambda_{n+1}\ge 
\lambda_{n+2}\ge \ldots \ge \lambda_\infty= \lambda$. Thus
\begin{equation}\label{h38}
1-2\gamma\lambda_n\le 1-2\gamma\lambda.
\end{equation}
Combining the relation (\ref{h34*}) with the relations (\ref{h36}) - (\ref{h38}), we obtain
 \begin{eqnarray}
\frac{\varphi}{\varphi-1}\|\bar{x}_{n+1}-x^\dagger\|^2&+&\frac{\mu\rho\lambda_{n+1}}{\lambda_{n+2}}\|x_{n+1}-x_n\|^2\le\frac{\varphi}{\varphi-1}(1-2\gamma\lambda)\|\bar{x}_n-x^\dagger\|^2\nonumber\\
&&+\frac{2\gamma\lambda\theta}{\varphi-1}\|\bar{x}_{n-1}-x^\dagger\|^2+\frac{\mu\lambda_n}{\lambda_{n+1}} \|x_n-x_{n-1}\|^2.\label{h39}
\end{eqnarray}
Setting $\Gamma_n=\frac{\varphi}{\varphi-1}\|\bar{x}_n-x^\dagger\|^2$, $\Xi_n=\frac{\mu\rho\lambda_{n}}{\lambda_{n+1}}\|x_{n}-x_{n-1}\|^2$ and 
$\alpha=2\lambda \gamma>0$, the inequality (\ref{h39}) can be shortly rewritten as
\begin{eqnarray}
\Gamma_{n+1}+\Xi_{n+1}&\le&(1-\alpha)\Gamma_n+\frac{\theta\alpha}{\varphi}\Gamma_{n-1}+\frac{\Xi_n}{\rho}.\label{h40}
\end{eqnarray}
Let $r_1>0$ and $r_2>0$. Now, we can rewrite relation (\ref{h40}) in the following form,
\begin{eqnarray}
\Gamma_{n+1}+r_1 \Gamma_n+\Xi_{n+1}&\le& r_2(\Gamma_n+r_1 \Gamma_{n-1})+ \frac{\Xi_n}{\rho}\nonumber\\
&&+(1-\alpha-r_2+r_1)\Gamma_n+(\frac{\theta\alpha}{\varphi} -r_1 r_2) \Gamma_{n-1}.\label{h41}
\end{eqnarray}
Choosing $r_1>0$ and $r_2>0$ such that $1-\alpha-r_2+r_1=0$ and $\frac{\theta\alpha}{\varphi} -r_1 r_2=0$, we obtain, 
by a straightforward computation
$$
r_1=\frac{\alpha-1+\sqrt{(\alpha-1)^2+\frac{4\theta\alpha}{\varphi}}}{2}\quad \mbox{and}\quad r_2=\frac{1-\alpha+\sqrt{(\alpha-1)^2+\frac{4\theta\alpha}{\varphi}}}{2}.
$$
Afterwards we study the behavior of the following function, 
$$
h(t)=\frac{1-t+\sqrt{(t-1)^2+\frac{4\theta t}{\varphi}}}{2}, ~t\in [0,+\infty).
$$
whose derivative is given by 
$$
h'(t)=-\frac{1}{2}+\frac{t-1+\frac{2\theta}{\varphi}}{2\sqrt{(t-1)^2+\frac{4\theta t}{\varphi}}}=
\frac{\frac{4\theta}{\varphi}(\frac{\theta}{\varphi}-1)}{2\sqrt{(t-1)^2+\frac{4\theta t}{\varphi}}\left(t-1+\frac{2\theta}{\varphi}+\sqrt{(t-1)^2+\frac{4\theta t}{\varphi}}\right)}<0
$$
because $1<\theta<\varphi$ (see, the relation (\ref{h35})). Thus, $h(t)$ is non-increasing on $[0,+\infty)$, and $0<r_2=h(\alpha)<h(0)=1$. Next, set 
$\epsilon=\max \left\{\frac{1}{\rho},r_2\right\}$ and note that $\epsilon\in (0,1)$. Then from the relation (\ref{h41}) and  the equalities
$1-\alpha-r_2+r_1=0$ and $\frac{\theta\alpha}{\varphi} -r_1 r_2=0$, we obtain
\begin{eqnarray}
\Gamma_{n+1}+r_1 \Gamma_n+\Xi_{n+1}&\le& r_2(\Gamma_n+r_1 \Gamma_{n-1})+ \frac{\Xi_n}{\rho}\nonumber\\
 &\le&\epsilon (\Gamma_n+r_1 \Gamma_{n-1}+ \Xi_n),~\forall n\ge n_0.\label{h42}
\end{eqnarray}
From the $Q$ - convergence rate for the sequence $\left\{\Gamma_n+r_1 \Gamma_{n-1}+ \Xi_n\right\}$, we obtain the $R$ - convergence rate for 
the sequence $\left\{\Xi_n\right\}$ and thus, from the defintion of $\Xi_n$, for the sequence $\left\{\|x_{n}-x_{n-1}\|\right\}$ because 
the sequence $\left\{\frac{\mu\rho\lambda_{n}}{\lambda_{n+1}}\right\}$ is bounded 
below from $0$. This implies immediately that the sequence $\left\{x_{n}\right\}$ converges $R$ - linearly. This completes the proof.
\end{proof}
\section{Numerical illustrations}\label{example}
In this section, we consider an equilibrium problem which is based on the Nash-Cournot equilibrium model \cite{CKK2004,FP2002}, and we present some 
experiments to describe the numerical behavior of Algorithm \ref{alg1} (EGRA) in comparison with two other algorithms, namely the linesearch 
extragradient method (LEGM) introduced in \cite[Algorithm 2a]{QMH2008} and the ergodic method (ErgM) proposed in \cite{AHT2015}. 
We choose these algorithms for the comparison because they have the same features: the Lipschitz-type constants are not necessarily known. In order 
to demonstrate the computational performance of the algorithms, we use the sequence $D_n=\|x_n-{\rm prox}_{\lambda f(x_n,.)}(x_n)\|^2,~n=0,~1,2,\ldots$ versus 
the number of iterations (\# Iterations)  or the  execution time (Elapsed Time) in seconds. Here $\lambda>0$ and the sequence $\left\{x_n\right\}$ 
is generated by each algorithm. Finally let us observe that $D_n=0$ if and only if $x_n$ is a solution of the problem.\\[.1in]
In a purpose of legibility, we simplify the model as follows: 
Assume that there are $m$ companies that produce a commodity. Let $x$ denote the vector whose entry $x_j$ stands for the quantity  of
 the commodity produced  by company $j$. We suppose that the price $p_j(s)$ is a decreasing affine function of $s$ with $s= \sum_{j=1}^m x_j$, 
i.e., $p_j(s)= \alpha_j - \beta_j s$, where $\alpha_j > 0$, $\beta_j > 0$. Then the profit made by company $j$ is given by $f_j(x)= p_j(s) x_j -
c_j( x_j)$, where $c_j(x_j)$ is the tax and fee for generating $x_j$. Suppose that $C_j=[x_j^{\min},x_j^{\max}]$ is the strategy set of company $j$, then the
strategy set of the model is $C:= C_1\times C_2 \times ...\times C_m$. Actually, each company seeks to  maximize its profit by choosing the
corresponding production level under the presumption that the production of the other companies is a parametric input.
 A commonly used approach to this model is based upon the famous Nash equilibrium concept. We recall  that a point $x^* \in C=C_1\times C_2
\times\cdots\times C_m$ is  an equilibrium point of the model   if
    $$f_j(x^*) \geq f_j(x^*[x_j]), \ \forall x_j \in C_j, \ \forall  j=1,2,\ldots,m,$$
    where the vector $x^*[x_j]$ stands for the vector obtained from
    $x^*$ by replacing $x^*_j$ with $x_j$.
By taking $f(x, y):= \psi(x,y)-\psi(x,x)$ with $\psi(x,y):=  -\sum_{j = 1}^m f_j(x[y_j])$, 
the problem of finding a Nash equilibrium point of the model can be formulated as:
$$\mbox{Find}~x^* \in C~\mbox{such that}~ f(x^*,x) \geq 0 ~\mbox{ for all }x \in C. $$
Furthermore, assume that the tax-fee function $c_j(x_j)$ is increasing and affine for every $j$. This assumption means that both tax and fee  
for producing a unit are increasing as the quantity of the production gets larger. In that case, the bifunction $f$ can be formulated in 
the form 
$$f(x,y)=\left\langle Px+Qy+q,y-x\right\rangle,$$
where $q\in \Re^m$ and $P,~Q$ are two matrices of order $m$ such that $Q$ is symmetric positive semidefinite and $Q-P$ is symmetric 
negative semidefinite. In this case, the bifucntion $f$ is pseudomonotone and satisfies the Lipschitz-type condition. \\[.1in]
We perform the numerical computations in $\Re^m$ with $m=100,~200,~300$; the starting point $x_{-1}=x_0=\bar{x}_{-1}=(1,1,\ldots,1)^T\in \Re^m$; 
the feasible set is a polyhedral convex set given by $C=\left\{x\in \Re^m: Ax\le b\right\}$, where $A$ is a random matrix of size $l\times m$ ($l=10$)
and $b\in\Re^l$ is a random vector such that $x_0\in C$. The control parameters are $\mu=0.45 \varphi$ (for the EGRA); $\lambda_n=\frac{1}{n}$ (for 
the ErgM). The data is generated as follows: All the entries of $q$ are generated randomly 
and uniformly in the interval $[-2,2]$ and the two matrices $P,~Q$ are also 
generated randomly\footnote{We randomly choose $\lambda_{1k}\in [-2,0],~\lambda_{2k}\in [0,2],~ k=1,\ldots,m$. We set $\widehat{Q}_1$, $\widehat{Q}_2$ 
as two diagonal matrixes with eigenvalues $\left\{\lambda_{1k}\right\}_{k=1}^m$ and $\left\{\lambda_{2k}\right\}_{k=1}^m$, respectively. Then, we 
construct a positive semidefinite matrix $Q$ and a negative definite matrix $T$ by using random orthogonal matrixes with $\widehat{Q}_2$ 
and $\widehat{Q}_1$, respectively. Finally, we set $P=Q-T$} such that their conditions hold. All the optimization subproblems are effectively solved 
by the subroutine \textit{quadprog} in Matlab 7.0.\\[.1in]
The numerical results are shown in Figures \ref{fig1} - \ref{fig6} where we can see that the convergence of the EGRA strictly depends on the starting stepsize $\lambda_0$. These results also illustrate that the EGRA works well and has a competitive advantage over the other algorithms.
\begin{figure}[!ht]
\begin{minipage}[b]{0.45\textwidth}
\centering
\includegraphics[height=5cm,width=6cm]{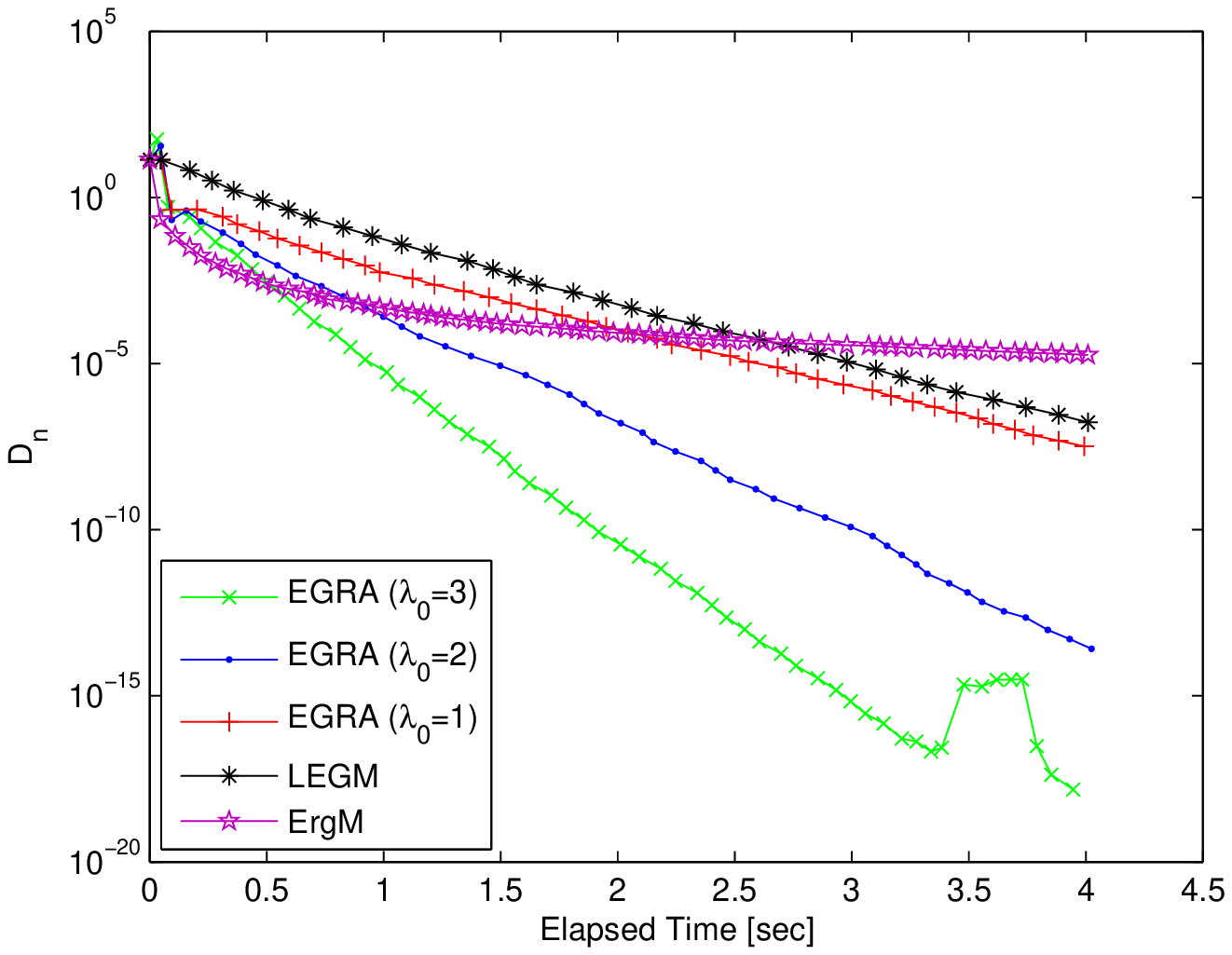}
\caption{$D_n$ and Time (in $\Re^{100}$)}\label{fig1}
\end{minipage}
\hfill
\begin{minipage}[b]{0.45\textwidth}
\centering
\includegraphics[height=5cm,width=6cm]{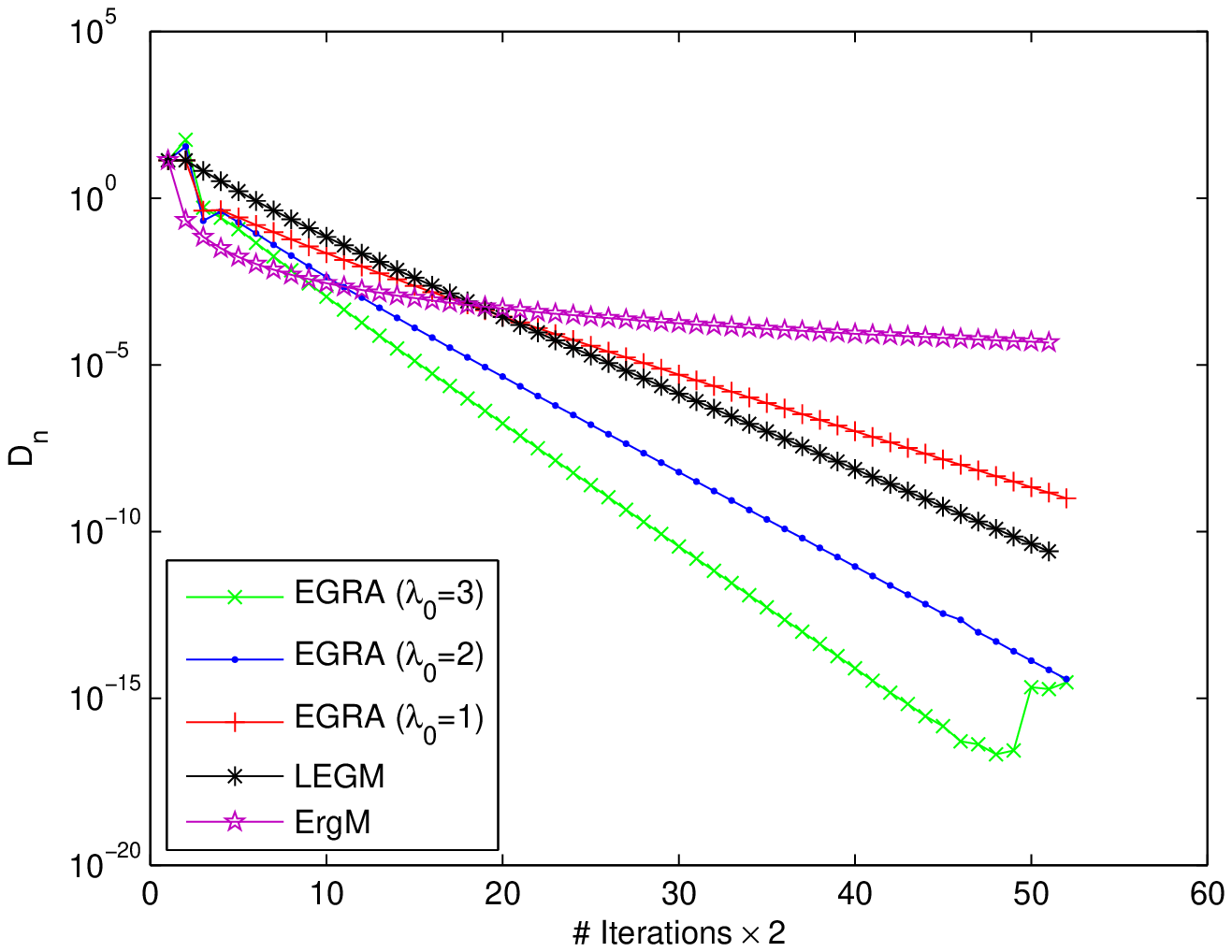}
\caption{$D_n$ and \# Iterations (in $\Re^{100}$)}\label{fig2}
\end{minipage}
\end{figure}

\begin{figure}[!ht]
\begin{minipage}[b]{0.45\textwidth}
\centering
\includegraphics[height=5cm,width=6cm]{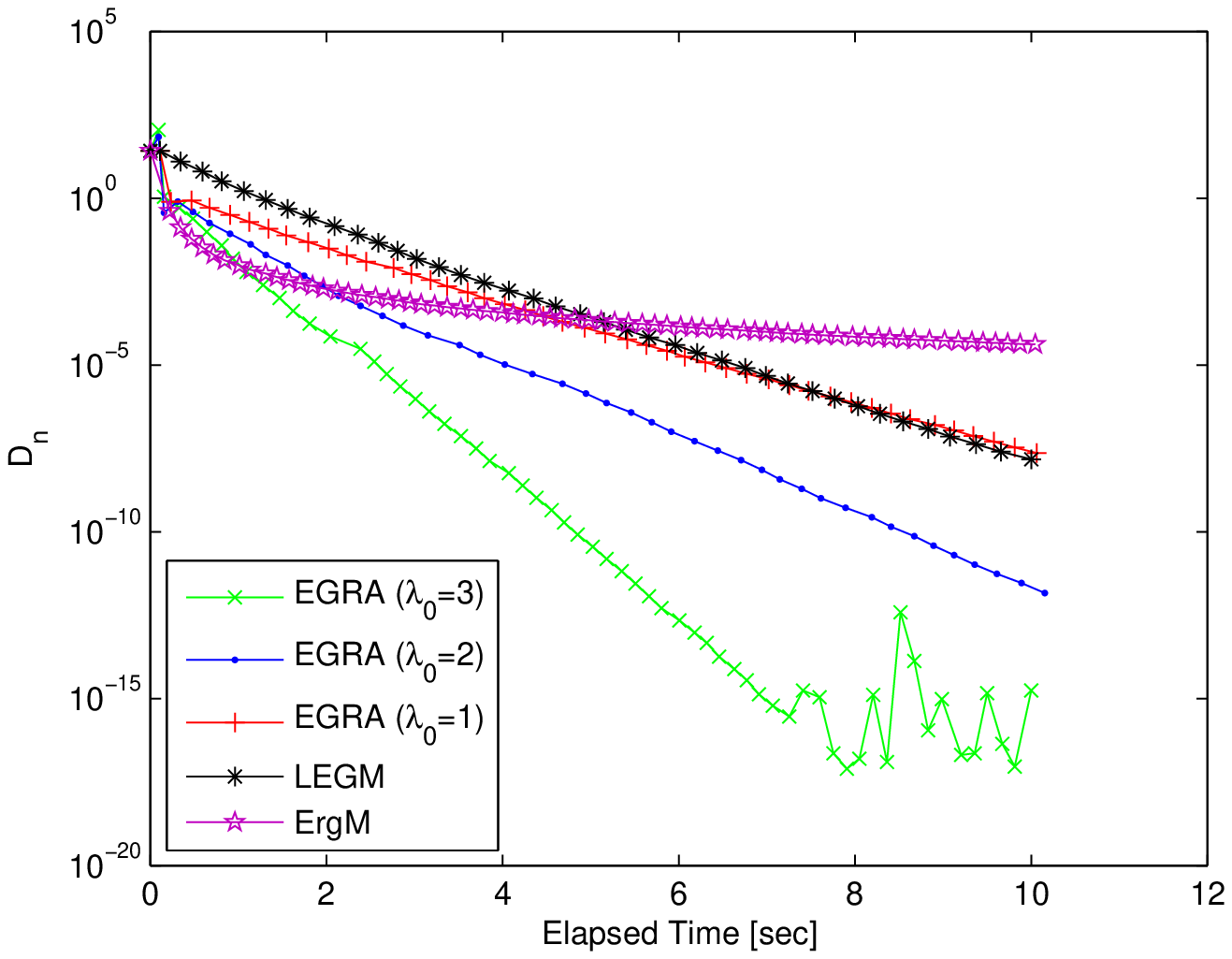}
\caption{$D_n$ and Time (in $\Re^{200}$)}\label{fig3}
\end{minipage}
\hfill
\begin{minipage}[b]{0.45\textwidth}
\centering
\includegraphics[height=5cm,width=6cm]{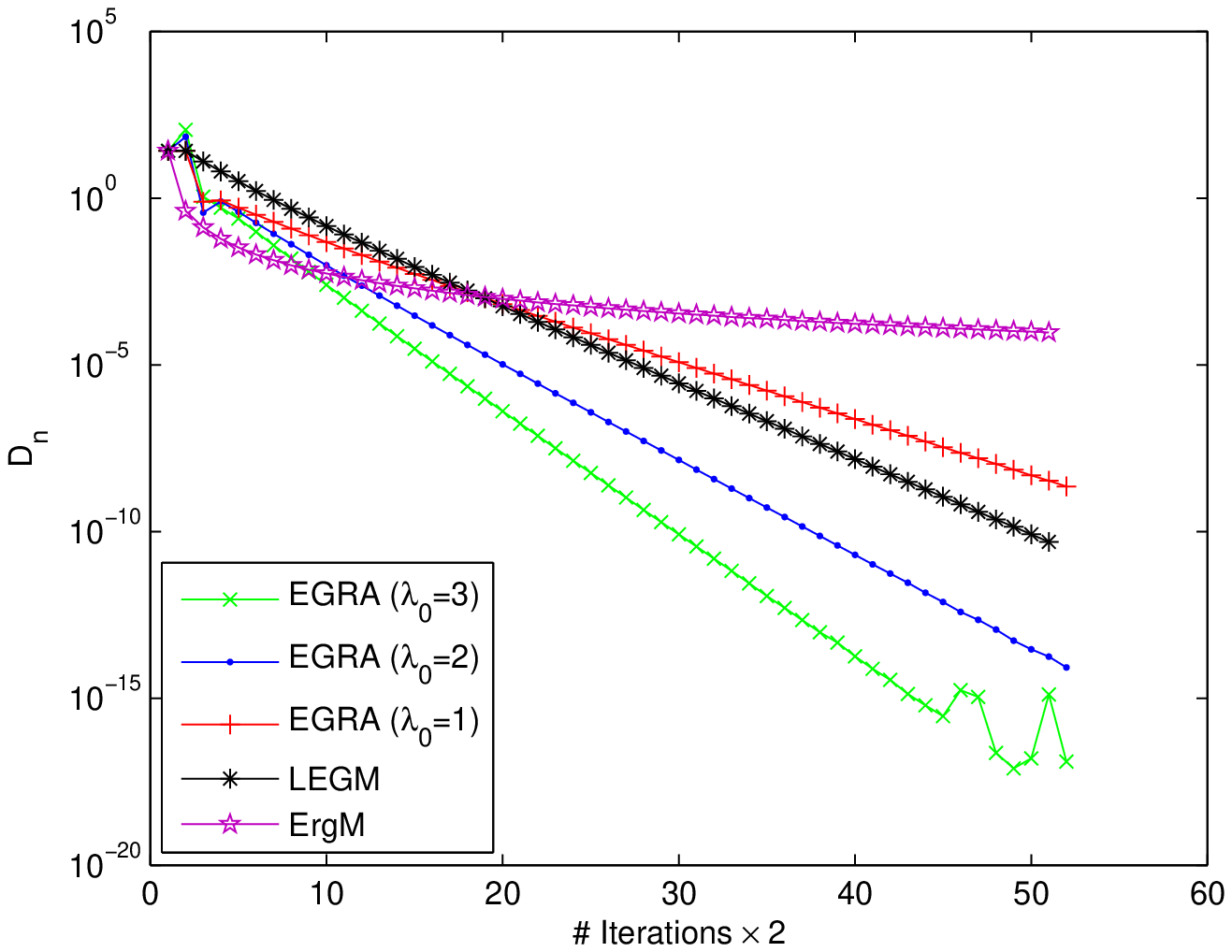}
\caption{$D_n$ and \# Iterations (in $\Re^{200}$)}\label{fig4}
\end{minipage}
\end{figure}

\begin{figure}[!ht]
\begin{minipage}[b]{0.45\textwidth}
\centering
\includegraphics[height=5cm,width=6cm]{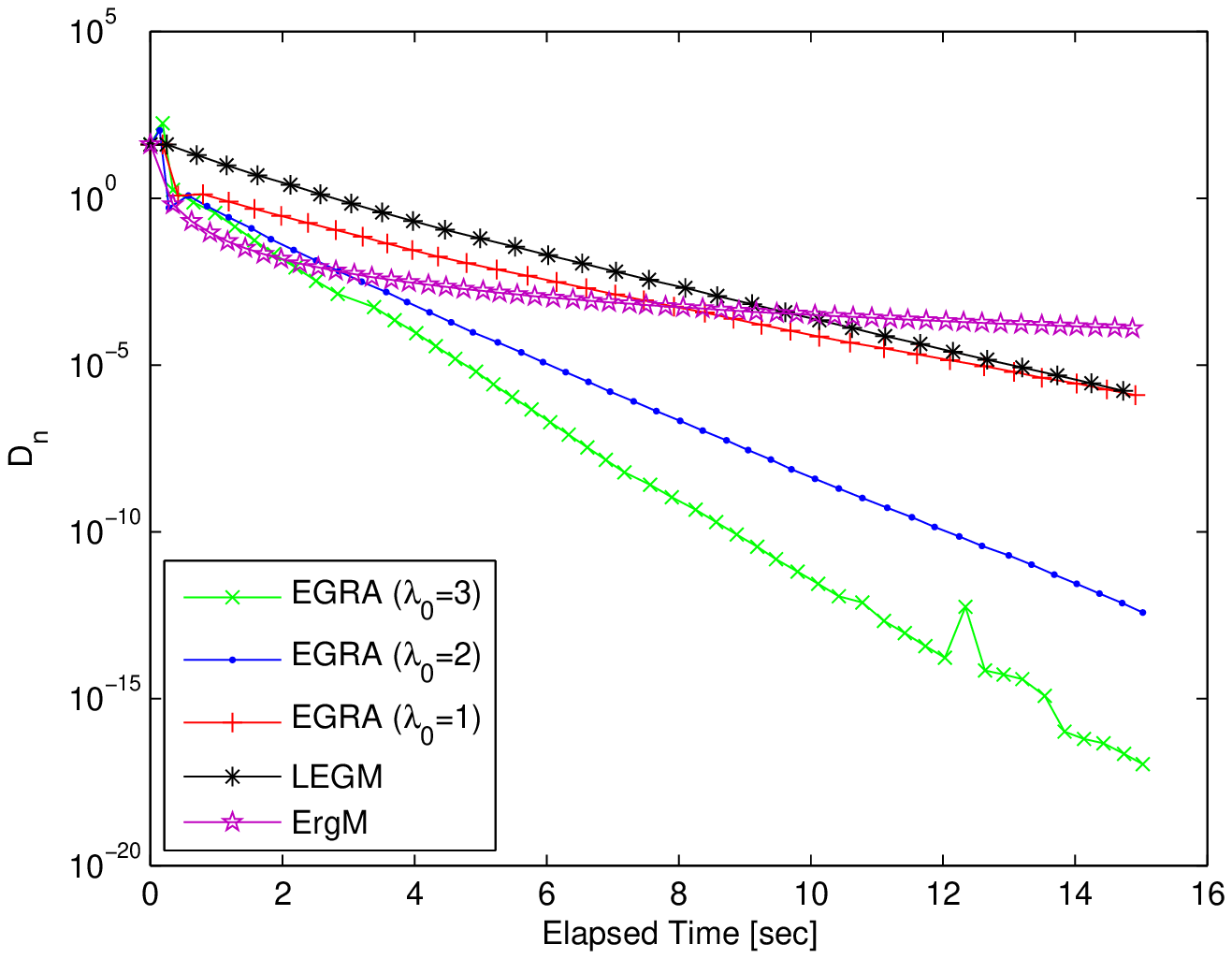}
\caption{$D_n$ and Time (in $\Re^{300}$)}\label{fig5}
\end{minipage}
\hfill
\begin{minipage}[b]{0.45\textwidth}
\centering
\includegraphics[height=5cm,width=6cm]{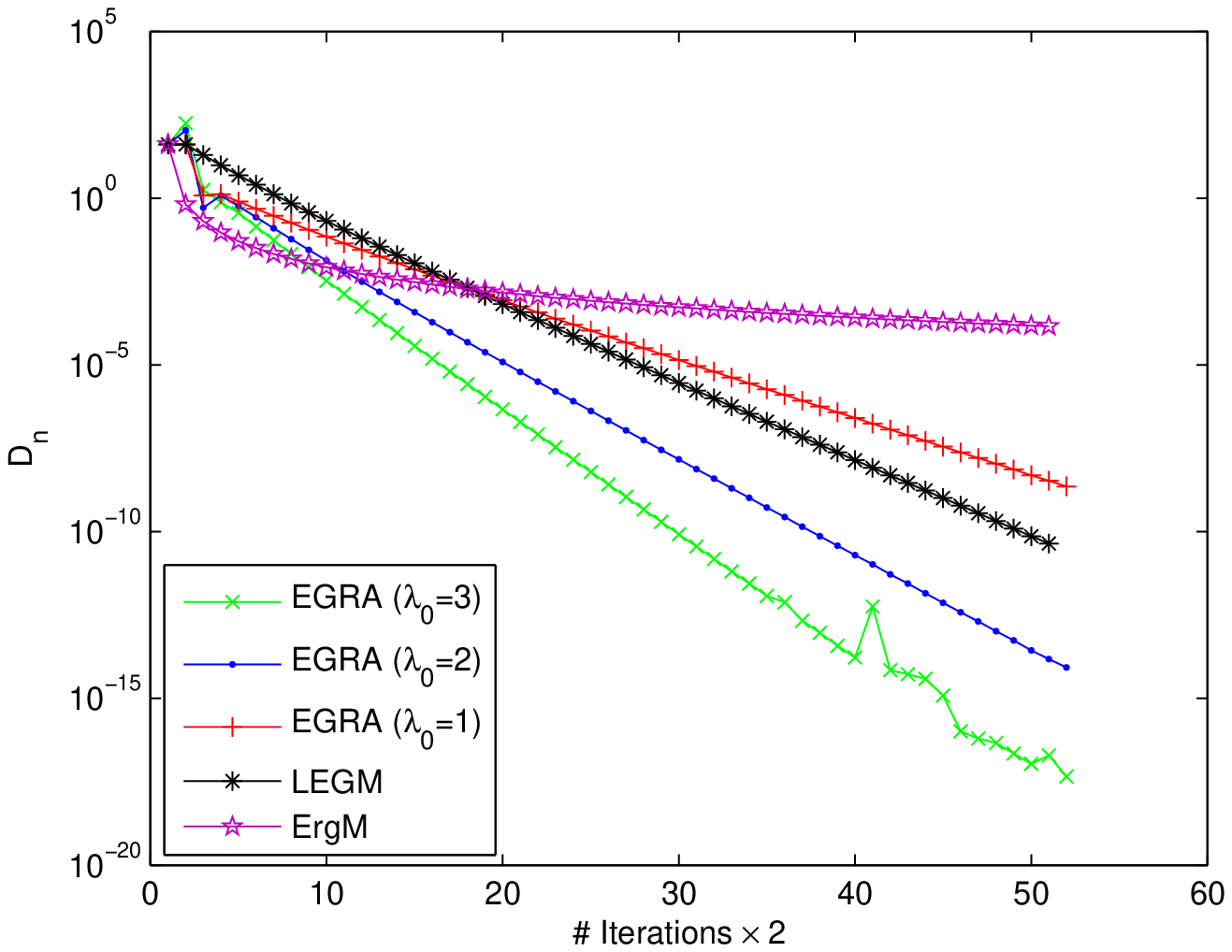}
\caption{$D_n$ and \# Iterations (in $\Re^{300}$)}\label{fig6}
\end{minipage}
\end{figure}
 
\section{Conclusions}
The paper has presented an explicit golden ratio algorithm for solving an equilibrium problem involving a (strongly) pseudomonotone and Lipschitz-type bifunction 
in a finitie dimensional space. The convergence as well as the $R$-linear rate of convergence of the proposed algorithm are established. Several numerical experiments have been done to illustrate the computational advantages of the new algorithm. The main advantages of 
the presented algorithm are that it only requires to solve one optimization problem by iteration and that it does not use any information on the value of the Lipschitz-type constant 
of the bifunction. In addition, the stepsize is generated by the algorithm at each iteration without using any linesearch procedure. 

\section*{Acknowledgements}
We would like to thank \textbf{Dr. Yura Malitsky} and \textbf{Dr. Nguyen The Vinh} for sending us the papers \cite{M2018,V2018}. 
They have given us many valuable comments and suggestions to write this paper.

\end{document}